\numberwithin{equation}{section}
\newtheorem{theorem}{Theorem}[section]
\newtheorem{proposition}[theorem]{Proposition}
\newtheorem{definition}[theorem]{Definition}
\newtheorem{remark}[theorem]{Remark}
\newtheorem{lemma}[theorem]{Lemma}
\newtheorem{example}[theorem]{Example}
\newtheorem{theorem*}{Theorem}
\newcommand{\modu}{\operatorname{mod}}
\newcommand{\Gen}{\operatorname{Gen}}
\newcommand{\Cogen}{\operatorname{Cogen}}
\newcommand{\Hom}{\operatorname{Hom}}
\newcommand{\Ext}{\operatorname{Ext}}
\newcommand{\inj}{\operatorname{inj}}
\newcommand{\add}{\operatorname{add}}
\newcommand{\pd}{\operatorname{pd}}
\newcommand{\proj}{\operatorname{proj}}
\newcommand{\Ker}{\operatorname{Ker}}
\newcommand{\Coker}{\operatorname{Coker}}
\newcommand{\Ima}{\operatorname{Im}}
\newcommand{\RNum}[1]{\uppercase\expandafter{\romannumeral #1\relax}}
\title{ \bf Recollements and tilting modules \thanks{2010 Mathematics Subject Classification: 16G10, 18E40}
\thanks{Keywords: Torsion pairs, recollements, tilting module.
 }}
\author{Xin Ma$^1$, Tiwei Zhao$^2$\thanks{Corresponding author. E-mail address: tiweizhao@qfnu.edu.cn}
\\
{\it \footnotesize 1. College of Science, Henan University of Engineering, Zhengzhou 451191, P.R. China}
\\
{\it \footnotesize 2. School of Mathematical Sciences, Qufu Normal University, Qufu 273165, P.R. China}
\\
}
\date{ }
\begin{document}

\baselineskip=16pt
\maketitle

\begin{abstract}
Let $(\modu \Lambda',\modu \Lambda,\modu \Lambda'')$ be a recollement of module  categories for artin algebras $\Lambda'$, $\Lambda$ and $\Lambda''$.
 We provide a sufficient condition such that a glued torsion pair in $\modu \Lambda$ is tilting when the given two torsion pairs are tilting  in $\modu \Lambda'$ and $\modu \Lambda''$ respectively.
Using this result, we give a construction of gluing of tilting modules
in $\modu \Lambda$ with respect to tilting modules in $\modu \Lambda'$ and $\modu \Lambda''$ respectively.
\end{abstract}

\pagestyle{myheadings}
\markboth{\rightline {\scriptsize   X. Ma and T. Zhao}}
         {\leftline{\scriptsize Recollements and tilting modules}}

\section{Introduction} 

Recollements of abelian and triangulated categories were introduced by Be{\u\i}linson, Bernstein and Deligne \cite{BBD81F} in
connection with derived categories of sheaves on topological spaces with the idea that one triangulated category may be ``glued together" from two others.

Recently, gluing techniques with respect to a recollement of triangulated or abelian categories have been investigated for cotorsion pairs \cite{CJM13C}, torsion pairs \cite{MXHZY}, and so on (e.g. \cite{MZH,Qin,Zhang,Zheng}).
In particular, for a recollement of triangulated categories, Liu, Vit\'oria and Yang presented explicit constructions of gluing of silting objects \cite{LQHVJ14G}. Moreover, Zhang considered the recollement of wide subcategories of abelian categories using gluing techniques \cite{Zhang}.

The classical tilting modules were introduced by Brenner and Butler \cite{BB79G}, and Happel and Ringel \cite{HDRCM82T}.
 It is closely related with torsion pairs; for instance, a tilting module can induce a torsion pair, and conversely, a torsion pair satisfying certain conditions can induce a tilting module (see \cite{AISD}).

 It is natural to ask that whether a tilting module can be ¡°glued together¡± from the other two tilting modules in a recollement of module categories.
For an artin algebra $\Lambda$, we use $\modu \Lambda$ to denote the category of finitely generated left $\Lambda$-modules. Let $\Lambda'$, $\Lambda$ and $\Lambda''$ be artin algebras such that there is a recollement of module categories:
$$\xymatrix{\modu \Lambda'\ar[rr]!R|-{i_{*}}&&\ar@<-2ex>[ll]!R|-{i^{*}}
\ar@<2ex>[ll]!R|-{i^{!}}\modu \Lambda
\ar[rr]!L|-{j^{*}}&&\ar@<-2ex>[ll]!L|-{j_{!}}\ar@<2ex>[ll]!L|-{j_{*}}
\modu \Lambda''}.$$

The paper is organized as follows.

In Section 2, we give some terminologies and some preliminary results.

The first author and Huang have given a construction of tilting modules in $\modu \Lambda$ from the tilting modules in $\modu \Lambda'$ and $\modu \Lambda''$ in \cite[Theorem 3.3]{MXHZYT} under some conditions.
For a module $T\in\modu\Lambda$, we write $T^{\bot_0}:=\{M\in\modu\Lambda\mid \Hom_{\Lambda}(T,M)=0\}$.
In Section 3, we first give the constructions of tilting torsion pairs in the recollement of abelian categories. Using them, we weaken the conditions in \cite[Theorem 3.3]{MXHZYT} and obtain the main result as follows.

\begin{theorem}\label{THM1}
Let $T'$ and $T''$ be tilting modules in $\modu \Lambda'$ and $\modu \Lambda''$ respectively. If $i^{!}$ and $j_{!}$ are exact, then there exists a tilting $\Lambda$-module $T$ forming as $T:=j_{!}(T'')\oplus M$ for some $\Lambda$-module $M$, such that $(\mathcal{T},\mathcal{F})=(\Gen T,T^{\perp_{0}})$, where  $(\mathcal{T},\mathcal{F})$ is a glued torsion pair in $\modu \Lambda$ with respect to $(\Gen T', T'^{\perp_{0}})$ and $(\Gen T'', T''^{\perp_{0}})$.
\end{theorem}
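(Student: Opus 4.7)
The plan is to proceed in three stages. First, I would form the glued torsion pair $(\mathcal{T},\mathcal{F})$ in $\modu \Lambda$ from the tilting torsion pairs $(\Gen T',T'^{\perp_{0}})$ and $(\Gen T'',T''^{\perp_{0}})$ using the gluing recipe recalled in Section 2, which under the exactness of $i^{!}$ and $j_{!}$ takes the standard form $\mathcal{T}=\{X\in\modu \Lambda\mid i^{*}(X)\in\Gen T',\ j^{*}(X)\in\Gen T''\}$ with the dual description of $\mathcal{F}$ in terms of $i^{!}$ and $j^{*}$. The role of the exactness hypotheses is exactly to make this prescription an honest torsion pair by forcing the relevant adjunction sequences $0\to i_{*}i^{!}(X)\to X\to j_{*}j^{*}(X)$ and $j_{!}j^{*}(X)\to X\to i_{*}i^{*}(X)\to 0$ to remain short exact in $\modu \Lambda$.

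Second, I would check that $(\mathcal{T},\mathcal{F})$ is a tilting torsion pair. For an artin algebra, a torsion pair is tilting precisely when the injective cogenerator lies in its torsion class. Since $T'$ and $T''$ are tilting, the injective cogenerators of $\modu \Lambda'$ and $\modu \Lambda''$ lie in $\Gen T'$ and $\Gen T''$ respectively; combining this with how $i^{*}$ and $j^{*}$ interact with injective $\Lambda$-modules through the exactness of $i^{!}$ and $j_{!}$, I would deduce that $D\Lambda\in\mathcal{T}$. The characterization of tilting torsion pairs recorded in Section 2 then yields a tilting $\Lambda$-module $T$ with $(\mathcal{T},\mathcal{F})=(\Gen T,T^{\perp_{0}})$; here $T$ may be taken as the basic additive generator of the Ext-projective objects of $\mathcal{T}$.

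Third, I would identify $j_{!}(T'')$ as a direct summand of $T$. Because $j_{!}$ is exact and $T''$ is tilting, $\pd_{\Lambda}j_{!}(T'')\leq 1$; the adjunction isomorphism $\Ext^{1}_{\Lambda}(j_{!}(T''),X)\cong \Ext^{1}_{\Lambda''}(T'',j^{*}(X))$ (valid because $j_{!}$ is exact) combined with $j^{*}(X)\in\Gen T''$ for every $X\in\mathcal{T}$ and $\Ext^{1}_{\Lambda''}(T'',\Gen T'')=0$ shows that $j_{!}(T'')$ is Ext-projective in $\mathcal{T}$; and $j^{*}j_{!}(T'')\cong T''\in\Gen T''$ together with the recollement identity $i^{*}j_{!}=0$ places $j_{!}(T'')$ in $\mathcal{T}$. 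Therefore $j_{!}(T'')\in\add T$, and $T$ decomposes as $j_{!}(T'')\oplus M$ for some complementary summand $M$.

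The main obstacle will be the step $D\Lambda\in\mathcal{T}$, i.e., simultaneously controlling $i^{*}(D\Lambda)$ and $j^{*}(D\Lambda)$ via the recollement, since this is where both exactness hypotheses on $i^{!}$ and $j_{!}$ must interact with the injective cogenerator in a compatible way. A secondary difficulty is verifying that the Ext-projective decomposition of $\mathcal{T}$ really has $j_{!}(T'')$ as a single summand (rather than merely being generated by it up to extensions), which rests on the recollement identities $j^{*}i_{*}=0$ and $i^{*}j_{!}=0$ applied together with the adjunction computations above.
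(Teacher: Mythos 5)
Your plan is correct in outline, but it is a genuinely different and less constructive route than the paper's. The paper builds $T$ explicitly: it takes a basis of $\Ext^{1}_{\Lambda}(i_{*}(T'),j_{!}(T''))$, forms the universal extension $0\to\bigoplus^{n}j_{!}(T'')\to M\to i_{*}(T')\to 0$ (a Bongartz-type completion), sets $T=j_{!}(T'')\oplus M$, checks (T1) and (T2) and the identity $(\mathcal{T},\mathcal{F})=(\Gen T,T^{\perp_{0}})$ by hand via the adjunction isomorphisms, and only at the last step invokes Proposition~\ref{prop-inj-tilt} together with \cite[Theorem \RNum{6}.6.5]{AISD} to compare $T$ with $\overline{T}=P(\mathcal{T})$ and deduce (T3). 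You instead go straight to the correspondence: the glued pair is tilting, so $P(\mathcal{T})$ is a tilting module realizing $(\mathcal{T},\mathcal{F})$, and $j_{!}(T'')$ lies in $\mathcal{T}$ and is $\Ext$-projective there (via $i^{*}j_{!}=0$, $j^{*}j_{!}\cong 1$, and $\Ext^{1}_{\Lambda}(j_{!}(T''),X)\cong\Ext^{1}_{\Lambda''}(T'',j^{*}(X))$, the last using that $j_{!}$ is exact and preserves projectives), hence $j_{!}(T'')\in\add P(\mathcal{T})$; replacing $P(\mathcal{T})$ by the additively equivalent tilting module $j_{!}(T'')\oplus P(\mathcal{T})$ gives the required shape $T=j_{!}(T'')\oplus M$ (this replacement is also the cleanest fix for the multiplicity issue when $T''$ is not basic, since the basic module $P(\mathcal{T})$ need not literally contain $j_{!}(T'')$ as a summand). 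Your argument skips the paper's Steps 1--3 entirely, which is a real simplification; what the paper's extra work buys is an explicit $M$, which is what makes the computations in the examples of Section 4 possible.

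Two points need care. First, the step you rightly flag as the main obstacle, that every injective lies in $\mathcal{T}$, is exactly Proposition~\ref{prop-inj-tilt}(1); the nontrivial half is $i^{*}(I)\in\Gen T'$, and the mechanism is not that $i^{*}$ preserves injectives (it does not), but that exactness of $i^{!}$ yields $0\to i_{*}i^{!}(I)\to I\to j_{*}j^{*}(I)\to 0$ and $i^{*}j_{*}=0$, so $i^{*}(I)$ is a quotient of $i^{*}i_{*}i^{!}(I)\cong i^{!}(I)$, which is injective and hence in $\Gen T'$, a class closed under quotients; the other half, $j^{*}(I)\in\Gen T''$, uses that $j^{*}$ preserves injectives because $j_{!}$ is exact. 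Your proposal names the right ingredients but does not supply this argument, so it must be filled in (or Proposition~\ref{prop-inj-tilt} quoted). Second, your framing of the hypotheses is slightly off: the glued $(\mathcal{T},\mathcal{F})$ is a torsion pair with no exactness assumptions at all (this is quoted from \cite{MXHZY}); the exactness of $i^{!}$ and $j_{!}$ is needed for the injectivity argument just described, not to make the gluing a torsion pair. Neither point is a fatal flaw, but both are where the actual content of the theorem sits.
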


We give some examples to illustrate the obtained result in Section 4.

\section{Preliminaries}\label{pre}
Throughout this paper, all subcategories are full, additive and closed under isomorphisms.

First of all, we recall the notion of recollements of abelian categories.
\begin{definition}{\rm (\cite{VFTP04C})\label{def-rec}
A recollement, denoted by ($\mathcal{A},\mathcal{B},\mathcal{C}$), of abelian categories is a diagram
\begin{equation}\label{re}
  \xymatrix{\mathcal{A}\ar[rr]!R|{i_{*}}&&\ar@<-2ex>[ll]!R|{i^{*}}\ar@<2ex>[ll]!R|{i^{!}}\mathcal{B}
\ar[rr]!L|{j^{*}}&&\ar@<-2ex>[ll]!L|{j_{!}}\ar@<2ex>[ll]!L|{j_{*}}\mathcal{C}}
\end{equation}
of abelian categories and additive functors such that
\begin{enumerate}
\item[(1)] ($i^{*},i_{*}$), ($i_{*},i^{!}$), ($j_{!},j^{*}$) and ($j^{*},j_{*}$) are adjoint pairs.
\item[(2)] $i_{*}$, $j_{!}$ and $j_{*}$ are fully faithful.
\item[(3)] $\Ima i_{*}=\Ker j^{*}$.
\end{enumerate}}
\end{definition}

Let $\mathcal{A}$ be an abelian category and $\mathcal{D}$ a subcategory of $\mathcal{A}$. We denote by $\proj \mathcal{A}$ (resp. $\inj \mathcal{A}$) the subcategory of $\mathcal{A}$ consisting of all projective (resp. injective) objects in $\mathcal{A}$.
We use $\add \mathcal{D}$ to denote the subcategory of $\mathcal{A}$ consisting of direct summands of finite direct sums of objects in $\mathcal{D}$.
We use $P(\mathcal{D})$ to denote the direct sum of one copy of each indecomposable $\Ext$-projective object in $\mathcal{D}$, that is
$$P(\mathcal{D})=\{M\in \mathcal{D}\mid \Ext_{\mathcal{A}}^{1}(M,\mathcal{D})=0\}.$$
We write
$${^{\perp_{0}}\mathcal{D}}:=\{M\in\mathcal{A}\mid\Hom_{\mathcal{A}}(M,\mathcal{D})=0\},$$
$${\mathcal{D}^{\perp_{0}}}:=\{M\in\mathcal{A}\mid \Hom_{\mathcal{A}}(\mathcal{D},M)=0\}.$$

Now, we list some properties of recollements of abelian categories (see \cite{VFTP04C,MXHZY,PC14H,PCSO14G,PCVJ14R}), which will be used in the sequel.
\begin{lemma}\label{lem-rec}
Let ($\mathcal{A},\mathcal{B},\mathcal{C}$) be a recollement of abelian categories as (\ref{re}).
\begin{enumerate}
\item[(1)] $i^{*}j_{!}=0= i^{!} j_{*}$.
\item[(2)] The functors $i_{*}$, $j^{*}$ are exact,  the functors $i^{*}$, $j_{!}$ are right exact, and the functor $i^{!}$, $j_{*}$ are left exact.
\item[(3)] All  natural transformations $\xymatrix@C=15pt{i^{*}i_{*}\ar[r]&1_{\mathcal{A}}}$, $\xymatrix@C=15pt{1_{\mathcal{A}}\ar[r]&i^{!}i_{*}}$, $\xymatrix@C=15pt{1_{\mathcal{C}}\ar[r]&j^{*}j_{!}}$, and $\xymatrix@C=15pt{j^{*}j_{*}\ar[r]&1_{\mathcal{C}}}$ are natural isomorphisms. Moreover, all functors $i^{*}$, $i^{!}$ and $j^{*}$ are dense.
\item[(4)] If $i^{*}$ is exact, then $i^{!}j_{!}=0$; and if $i^{!}$ is exact, then $i^{*}j_{*}=0$.
\end{enumerate}
\end{lemma}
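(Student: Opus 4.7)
The plan rests on the classical dictionary between tilting modules and tilting torsion pairs in $\modu\Lambda$: a torsion pair $(\mathcal{T},\mathcal{F})$ is tilting (i.e., equal to $(\Gen T,T^{\perp_{0}})$ for a tilting module $T$) if and only if $\mathcal{T}$ cogenerates $\modu\Lambda$, equivalently $\inj\Lambda\subseteq\mathcal{T}$; in this case $P(\mathcal{T})$ is a basic tilting module realizing the pair. Thus the theorem splits into two tasks: (i) verify that the glued torsion class contains every injective $\Lambda$-module, and (ii) exhibit $j_{!}(T'')$ as an Ext-projective direct summand of $P(\mathcal{T})$.

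For step (i), I would first unpack the gluing construction from \cite{MXHZY}: under the exactness hypotheses on $j_{!}$ and $i^{!}$, the glued torsion pair takes the form
\[
\mathcal{T}=\{M\in\modu\Lambda\mid i^{*}(M)\in\Gen T',\ j^{*}(M)\in\Gen T''\},
\]
with the evident dual description of $\mathcal{F}$. For an injective $E\in\modu\Lambda$, exactness of $j_{!}$ makes its right adjoint $j^{*}$ preserve injectives, so $j^{*}(E)\in\inj\Lambda''\subseteq\Gen T''$, since $T''$ is tilting. For the $\Lambda'$-coordinate, $i_{*}$ is always exact, so $i^{!}$ preserves injectives and $i^{!}(E)\in\inj\Lambda'\subseteq\Gen T'$. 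Combining this with the extra identity $i^{*}j_{*}=0$ granted by exactness of $i^{!}$ (Lemma~\ref{lem-rec}(4)) and the canonical recollement sequence relating $i^{*}$, $j_{!}j^{*}$ and the identity on $\modu\Lambda$, I would argue that $i^{*}(E)\in\Gen T'$ as well. This yields $\inj\Lambda\subseteq\mathcal{T}$, so $(\mathcal{T},\mathcal{F})$ is a tilting torsion pair.

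For step (ii), the membership $j_{!}(T'')\in\mathcal{T}$ is immediate from $i^{*}j_{!}=0$ and $j^{*}j_{!}\cong 1$ (Lemma~\ref{lem-rec}(1),(3)). The key computation uses that $j^{*}$ is exact, so $j_{!}$ sends projectives to projectives; together with the hypothesis that $j_{!}$ itself is exact, this upgrades the adjunction to a natural isomorphism
\[
\Ext^{1}_{\Lambda}\bigl(j_{!}(T''),X\bigr)\;\cong\;\Ext^{1}_{\Lambda''}\bigl(T'',j^{*}(X)\bigr)
\]
for every $X\in\modu\Lambda$. When $X\in\mathcal{T}$, the right side vanishes because $T''$ is Ext-projective in $\Gen T''$, so $j_{!}(T'')$ is Ext-projective in $\mathcal{T}$. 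Since $j_{!}$ is fully faithful, the indecomposable summands of $j_{!}(T'')$ belong to $\add P(\mathcal{T})$, and picking a complementary summand $M$ inside $P(\mathcal{T})$ produces a tilting module $T=j_{!}(T'')\oplus M$ with $\add T=\add P(\mathcal{T})$. The identifications $(\Gen T,T^{\perp_{0}})=(\mathcal{T},\mathcal{F})$ then follow from the tilting/torsion-pair dictionary.

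The main obstacle is the $\Lambda'$-coordinate in step (i): the functor $i^{*}$ is not automatically injective-preserving, and one must leverage the exactness of $i^{!}$ together with the recollement identities of Lemma~\ref{lem-rec} to transfer information from $i^{!}(E)\in\Gen T'$ across to $i^{*}(E)\in\Gen T'$. Once this hurdle is cleared, the remainder of the argument is a formal assembly of the recollement identities and the tilting/torsion-pair dictionary.
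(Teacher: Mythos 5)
Your proposal does not address the statement it is supposed to prove. The statement in question is Lemma~\ref{lem-rec}, a collection of formal properties of the six functors in a recollement of abelian categories: the vanishing $i^{*}j_{!}=0=i^{!}j_{*}$, the exactness behaviour of each functor coming from the adjunctions, the unit/counit isomorphisms $i^{*}i_{*}\cong 1_{\mathcal{A}}$, $1_{\mathcal{A}}\cong i^{!}i_{*}$, $1_{\mathcal{C}}\cong j^{*}j_{!}$, $j^{*}j_{*}\cong 1_{\mathcal{C}}$ together with density of $i^{*}$, $i^{!}$, $j^{*}$, and the conditional vanishings $i^{!}j_{!}=0$ and $i^{*}j_{*}=0$. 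What you have written instead is an outline of the main gluing theorem for tilting modules (Theorem~\ref{rec-tilting}): you discuss the tilting torsion-pair dictionary, injective objects lying in the glued torsion class, and the Ext-projectivity of $j_{!}(T'')$. None of the four items of the lemma is established anywhere in your text. Worse, your argument explicitly invokes Lemma~\ref{lem-rec}(1), (3) and (4) as known facts, so even read charitably it is circular as a proof of that lemma.

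To actually prove the lemma you would need arguments of a quite different and much more elementary nature: for instance, (1) follows by adjunction from $j^{*}i_{*}=0$ (which is part of $\Ima i_{*}=\Ker j^{*}$) together with full faithfulness of $i_{*}$, since $\Hom_{\mathcal{A}}(i^{*}j_{!}(C),A)\cong\Hom_{\mathcal{B}}(j_{!}(C),i_{*}(A))\cong\Hom_{\mathcal{C}}(C,j^{*}i_{*}(A))=0$ for all $A$; (2) is the standard fact that left adjoints are right exact and right adjoints are left exact, with $i_{*}$ and $j^{*}$ exact because they are simultaneously left and right adjoints; (3) is the standard reformulation of full faithfulness of $i_{*}$, $j_{!}$, $j_{*}$ via units and counits of the adjunctions, and density of $i^{*}$, $i^{!}$, $j^{*}$ follows from these isomorphisms; (4) requires an argument using the canonical exact sequences of the recollement. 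The paper itself does not reprove these facts but cites the literature; if you are asked to supply a proof, the content above is what is required, not a sketch of the tilting-module theorem.
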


Let $\mathcal{A}$ be an abelian category. We recall the notion of torsion pairs as follows.
\begin{definition}
{\rm (\cite{DSE66A}) A pair of subcategories $(\mathcal{T},\mathcal{F})$ of an abelian category $\mathcal{A}$ is called a \emph{torsion  pair}
if the following conditions are satisfied.
\begin{itemize}
\item[(1)] $\Hom_{\mathcal{A}}(\mathcal{T},\mathcal{F})=0$; that is, $\Hom_{\mathcal{A}}(X,Y)=0$
for any $X\in\mathcal{T}$ and $Y\in\mathcal{F}$.
\item[(2)] For any object $M\in \mathcal{A}$, there exists an exact sequence
\begin{align*}
\xymatrix@C=15pt{0\ar[r]&X\ar[r]&M\ar[r]&Y\ar[r]&0}
\end{align*}
in $\mathcal{A}$ with $X\in \mathcal{T}$ and $Y\in \mathcal{F}$.
\end{itemize}}
\end{definition}

Let $(\mathcal{T},\mathcal{F})$ be a torsion pair in an abelian category $\mathcal{A}$. Then we have
\begin{itemize}
\item[(1)] $\mathcal{T}$ is closed under extensions and quotient objects.
\item[(2)] $\mathcal{F}$ is closed under extensions and subobjects.
\end{itemize}
Moreover, we have $$\mathcal{T}={^{\perp_{0}}\mathcal{F}}\ \ \ \ \text{and}\ \ \ \    \mathcal{F}={\mathcal{T}^{\perp_{0}}}.$$

We need the following easy and useful observations.
\begin{lemma}\label{lem-2.6}{\rm (\cite[Lemma 2.4]{MXZTW} and \cite[Lemma 5.3]{ZJL})}
Let $(\mathcal{A},\mathcal{B},\mathcal{C})$ be a recollement of abelian categories as (\ref{re}). Then we have
\begin{itemize}
\item[(1)] If $i^{*}$ is exact, then $j_{!}$ is exact.
\item[(2)] If $i^{!}$ is exact, then $j_{*}$ is exact.
\end{itemize}
\end{lemma}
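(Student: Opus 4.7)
The plan is to establish both parts by the same strategy: use that $j^{*}$ is always exact in a recollement together with the orthogonality relations recalled in Lemma~\ref{lem-rec}, so that any kernel (part (1)) or cokernel (part (2)) obstructing exactness must simultaneously lie in $\Ima(i_{*})$ and be annihilated by $i^{*}$ (resp.\ $i^{!}$), forcing it to vanish.

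For (1), since $j_{!}$ is already right exact by Lemma~\ref{lem-rec}(2), it suffices to show that $j_{!}$ preserves monomorphisms. Fix a monomorphism $f\colon X\to Y$ in $\mathcal{C}$ and set $K=\Ker(j_{!}(f))$. First, I would apply the exact functor $j^{*}$ to the left exact sequence $0\to K\to j_{!}(X)\to j_{!}(Y)$; using the natural isomorphism $j^{*}j_{!}\cong 1_{\mathcal{C}}$ from Lemma~\ref{lem-rec}(3), the tail identifies with $f$, so $j^{*}(K)=0$. Hence $K\in\Ker(j^{*})=\Ima(i_{*})$, and we may write $K\cong i_{*}(L)$ for some $L\in\mathcal{A}$. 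Next, since $i^{*}$ is exact by hypothesis, applying it to the same sequence gives $0\to i^{*}(K)\to i^{*}j_{!}(X)\to i^{*}j_{!}(Y)$, and the last two terms vanish by $i^{*}j_{!}=0$ (Lemma~\ref{lem-rec}(1)). Thus $i^{*}(K)=0$; but $i^{*}(K)=i^{*}i_{*}(L)\cong L$ by Lemma~\ref{lem-rec}(3), so $L=0$, whence $K=0$ and $j_{!}(f)$ is a monomorphism.

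Part (2) is completely dual. Since $j_{*}$ is already left exact, it suffices to show that $j_{*}$ preserves epimorphisms. For an epimorphism $f\colon X\to Y$ in $\mathcal{C}$, put $C=\Coker(j_{*}(f))$. Applying the exact functor $j^{*}$ and using $j^{*}j_{*}\cong 1_{\mathcal{C}}$ gives $j^{*}(C)=0$, so $C\cong i_{*}(L)$ for some $L\in\mathcal{A}$. Applying the now-exact $i^{!}$ to the right exact sequence $j_{*}(X)\to j_{*}(Y)\to C\to 0$ and invoking $i^{!}j_{*}=0$ forces $i^{!}(C)=0$; combined with $i^{!}i_{*}\cong 1_{\mathcal{A}}$, this yields $L=0$ and hence $C=0$.

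No serious obstacle is expected; the only point that deserves care is bookkeeping of the exactness directions, namely that the standing right (resp.\ left) exactness of $j_{!}$ (resp.\ $j_{*}$) combined with the monomorphism-preserving (resp.\ epimorphism-preserving) argument above upgrades the conclusion to full exactness. Conceptually, everything reduces to the observation that in a recollement the obstruction to exactness of $j_{!}$ or $j_{*}$ lives entirely inside $\Ima(i_{*})$, and adding exactness of the complementary functor $i^{*}$ or $i^{!}$ is exactly what is needed to detect and kill it.
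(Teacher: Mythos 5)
Your proof is correct and is essentially the standard argument for this lemma (the paper itself only cites \cite{MXZTW} and \cite{ZJL} rather than proving it): the kernel/cokernel obstruction is killed by $j^{*}$ hence lies in $\Ima i_{*}=\Ker j^{*}$, and the assumed exactness of $i^{*}$ (resp.\ $i^{!}$) together with $i^{*}j_{!}=0$ (resp.\ $i^{!}j_{*}=0$) and $i^{*}i_{*}\cong 1_{\mathcal{A}}$ (resp.\ $i^{!}i_{*}\cong 1_{\mathcal{A}}$) forces it to vanish. All steps check out, including the final upgrade from right exact plus mono-preserving (resp.\ left exact plus epi-preserving) to exact.
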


\section{Tilting modules in a recollement of abelian categories}\label{tor}
Let $(\mathcal{A},\mathcal{B},\mathcal{C})$ be a recollement of abelian categories as (\ref{re}), and let ($\mathcal{T}',\mathcal{F}'$) and ($\mathcal{T}'',\mathcal{F}''$) be torsion pairs in $\mathcal{A}$ and $\mathcal{C}$
respectively. Following \cite{MXHZY}, there is a torsion pair $(\mathcal{T},\mathcal{F})$ in $\mathcal{B}$ defined by
\begin{align*}
\mathcal{T}&:=\{B\in\mathcal{B}\mid i^{*}(B)\in\mathcal{T}'\ \text{and}\ j^{*}(B)\in\mathcal{T}''\},\\
\mathcal{F}&:=\{B\in\mathcal{B}\mid i^{!}(B)\in\mathcal{F}'\ \text{and}\ j^{*}(B)\in\mathcal{F}''\}.
\end{align*}
In this case, we call $(\mathcal{T},\mathcal{F})$ a \emph{glued torsion pair} with respect to $(\mathcal{T'},\mathcal{F'})$ and $(\mathcal{T''},\mathcal{F''})$.

Now we recall the notion of classical tilting modules as follows.

\begin{definition}{\rm (\cite{BB79G,HDRCM82T})
Let $\Lambda$ be an artin algebra. A module $T\in\modu\Lambda$ is called a \emph{partical tilting} module if the following conditions hold.
\begin{itemize}
\item[(T1)] $\pd _{\Lambda}T\leq 1$.
\item[(T2)] $\Ext_{\Lambda}^{1}(T,T)=0$.
\end{itemize}

A partical tilting module $T$ is called a \emph{tilting} module if it also satisfies the following condition.
\begin{itemize}
\item[(T3)] There exists an exact sequence $\xymatrix@C=15pt{0\ar[r]&\Lambda\ar[r]&T_{0}\ar[r]&T_{1}\ar[r]&0}$ with $T_{0}$, $T_{1}\in \add T$.
\end{itemize}}
\end{definition}

Let $T\in \modu \Lambda$. We use $\Gen T$ to denote the class of all modules $M$ in $\modu \Lambda$ generated by $T$, that is,
 \begin{align*}
 \Gen T=\{M\in\modu \Lambda:\text{there exists an integer }n \geq 0 \text { and} \\ \text{an epimorphism }\xymatrix@C=15pt{T^{n}\ar[r]&M\ar[r]&0} \text{in} \modu \Lambda \}.
 \end{align*}
  Dually, $\Cogen M$ is defined.

A tilting $\Lambda$-module $T$ induces a torsion pair $(\Gen T,T^{\perp_{0}})$ in $\modu \Lambda$, where
\begin{align*}
\Gen T=&\mathcal{T}(T):=\{X\in\modu \Lambda: \Ext_{\Lambda}^{1}(T,X)=0\}.
\end{align*}

Recall from \cite{HDRI96T} that a torsion pair $(\mathcal{T},\mathcal{F})$ in an abelian category $\mathcal{A}$ is called \emph{tilting} (resp. \emph{cotilting})
if any object in $\mathcal{A}$ is isomorphic to a subobject of an object in $\mathcal{T}$
(resp. a quotient object of an object in $\mathcal{F}$).

\begin{remark}\label{rem-tilting}
{\rm (\cite[Lemma \RNum{1}.3.1]{HDRI96T})}
 Let $\mathcal{A}$ be an abelian category and $(\mathcal{T},\mathcal{F})$ a torsion pair in $\mathcal{A}$. If $\mathcal{A}$ has enough injective objects, then $(\mathcal{T},\mathcal{F})$ is tilting if and only if $\mathcal{T}$ contains all injective objects. Dually, if $\mathcal{A}$ has enough projective objects, then $(\mathcal{T},\mathcal{F})$ is cotilting if and only if $\mathcal{F}$ contains all projective objects.
 \end{remark}

\begin{remark}\label{rem-module}
Let $T$ be a tilting $\Lambda$-module  in $\modu \Lambda$. Notice that $(\Gen T, T^{\perp_{0}})$ is a torsion pair in $\modu \Lambda$, by \cite[Theorem \RNum{6}.6.5]{AISD}, we have that $\Gen T$ contains all injective objects in $\modu \Lambda$. So, by Remark \ref{rem-tilting}, we have that $(\Gen T, T^{\perp_{0}})$ is a tilting torsion pair in $\modu \Lambda$.
\end{remark}

In case for abelian categories with enough injective objects or projective objects, the following result is a special case of \cite[Theorem 1(4)]{MXHZY}. The original proof is to check the definition directly by using the pushout or pullback  tools, however in this sequel, we give a different proof by using the equivalent characterizations of tilting or cotilting torsion pairs in Remark \ref{rem-tilting}.
\begin{proposition}\label{prop-inj-tilt}
Let $(\mathcal{A},\mathcal{B},\mathcal{C})$ be a recollement of abelian categories as (\ref{re}), and let $(\mathcal{T}',\mathcal{F}')$ and $(\mathcal{T}'',\mathcal{F}'')$ be torsion pairs in $\mathcal{A}$ and $\mathcal{C}$ respectively. Denote by
$(\mathcal{T},\mathcal{F})$ a glued torsion pair in $\mathcal{B}$ with respect to $(\mathcal{T'},\mathcal{F'})$ and $(\mathcal{T''},\mathcal{F''})$.
 Then we have the following statements.
 \begin{itemize}
 \item[(1)] Assume that $\mathcal{B}$ and $\mathcal{C}$ have enough injective objects.
     If $i^{!}$ and $j_{!}$ are exact, and if $(\mathcal{T}',\mathcal{F}')$ and $(\mathcal{T}'',\mathcal{F}'')$ are tilting, then $(\mathcal{T},\mathcal{F})$ is tilting.
 \item[(2)]  Assume that $\mathcal{B}$ and $\mathcal{C}$ have enough projective objects.
     If $i^{*}$ and $j_{*}$ are exact, and if $(\mathcal{T}',\mathcal{F}')$ and $(\mathcal{T}'',\mathcal{F}'')$ are cotilting, then $(\mathcal{T},\mathcal{F})$ is cotilting.
 \end{itemize}
\end{proposition}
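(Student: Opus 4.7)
My plan is to reduce both parts to the characterization in Remark~\ref{rem-tilting}: for (1) it suffices to prove that the glued torsion class $\mathcal{T}$ contains every injective object of $\mathcal{B}$, and for (2) that $\mathcal{F}$ contains every projective object of $\mathcal{B}$. The two halves are entirely dual, so I will work out (1) in detail and indicate how (2) follows.

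First I would observe that, under the hypotheses of (1), $\mathcal{A}$ automatically inherits enough injectives from $\mathcal{B}$: given $A\in\mathcal{A}$, embed $i_{*}(A)\hookrightarrow I$ into an injective of $\mathcal{B}$ and apply the exact functor $i^{!}$. Since $i^{!}$ is right adjoint to the exact functor $i_{*}$ it preserves injectives, and the natural isomorphism $i^{!}i_{*}\cong 1_{\mathcal{A}}$ from Lemma~\ref{lem-rec}(3) turns this into an embedding $A\hookrightarrow i^{!}(I)$ into an injective of $\mathcal{A}$. Applying Remark~\ref{rem-tilting} to the two given tilting pairs then yields $\inj\mathcal{A}\subseteq\mathcal{T}'$ and $\inj\mathcal{C}\subseteq\mathcal{T}''$.

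Now fix $I\in\inj\mathcal{B}$. Exactness of $j_{!}$ makes its right adjoint $j^{*}$ preserve injectives, so $j^{*}(I)\in\inj\mathcal{C}\subseteq\mathcal{T}''$. For the other component I would invoke the canonical sequence
\[
0\longrightarrow i_{*}i^{!}(I)\longrightarrow I\longrightarrow j_{*}j^{*}(I)\longrightarrow 0,
\]
whose left exactness holds in any recollement and whose right exactness (surjectivity of the unit) follows from $i^{!}$ being exact: if $C$ is the cokernel of the unit, exactness of $j^{*}$ together with $j^{*}j_{*}\cong 1$ gives $j^{*}(C)=0$, so $C\cong i_{*}i^{*}(C)$; then applying the right exact functor $i^{*}$ to the cokernel sequence and using $i^{*}j_{*}=0$ from Lemma~\ref{lem-rec}(4) forces $i^{*}(C)=0$, hence $C=0$. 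Applying the right exact $i^{*}$ to the short exact sequence above, together with $i^{*}i_{*}\cong 1$ and $i^{*}j_{*}=0$, collapses it to an epimorphism $i^{!}(I)\twoheadrightarrow i^{*}(I)$. Since $i^{!}$ preserves injectives, $i^{!}(I)\in\inj\mathcal{A}\subseteq\mathcal{T}'$, and closure of $\mathcal{T}'$ under quotients delivers $i^{*}(I)\in\mathcal{T}'$. Thus $I\in\mathcal{T}$, and Remark~\ref{rem-tilting} completes (1).

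Part (2) is proved dually: one works with the canonical sequence $0\to j_{!}j^{*}(P)\to P\to i_{*}i^{*}(P)\to 0$, which is short exact when $i^{*}$ is exact, and checks that any projective $P\in\mathcal{B}$ satisfies $j^{*}(P)\in\proj\mathcal{C}\subseteq\mathcal{F}''$ (because $j_{*}$ exact makes $j^{*}$ preserve projectives, via Lemma~\ref{lem-2.6}(2)) and $i^{!}(P)\in\mathcal{F}'$ via a monomorphism $i^{!}(P)\hookrightarrow i^{*}(P)$ produced by applying the left exact functor $i^{!}$ and invoking $i^{!}j_{!}=0$ from Lemma~\ref{lem-rec}(4), combined with the fact that $i^{*}(P)$ is projective in $\mathcal{A}$ and hence in $\mathcal{F}'$ by the dual of Remark~\ref{rem-tilting}. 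The one step requiring real care is the right exactness of the canonical sequence in (1) (and its left exact dual in (2)); once this is granted, the rest of the proof is a straightforward combination of the vanishing identities in Lemma~\ref{lem-rec} with the closure properties of torsion and torsion-free classes.
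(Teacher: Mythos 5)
Your proof is correct and follows essentially the same route as the paper: reduce via Remark~\ref{rem-tilting} to showing $\mathcal{T}$ contains $\inj\mathcal{B}$ (resp.\ $\mathcal{F}$ contains $\proj\mathcal{B}$), use that $j^{*}$ and $i^{!}$ preserve injectives, and apply the right exact $i^{*}$ to the canonical sequence $0\to i_{*}i^{!}(I)\to I\to j_{*}j^{*}(I)\to 0$ to get an epimorphism $i^{!}(I)\twoheadrightarrow i^{*}(I)$ and conclude by closure of $\mathcal{T}'$ under quotients. The only difference is that you verify in-line the facts the paper cites (enough injectives in $\mathcal{A}$ and the exactness of the canonical sequence when $i^{!}$ is exact), and your parenthetical appeal to Lemma~\ref{lem-2.6}(2) in part (2) is unnecessary since $j_{*}$ is exact there by hypothesis.
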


\begin{proof}
Assume that $\mathcal{B}$ and $\mathcal{C}$ have enough projective (resp. injective) objects, then $\mathcal{A}$ has enough projective (resp. injective) objects by \cite[Proposition 2.5]{MXZTW}.

(1) Since ($\mathcal{T}',\mathcal{F}'$) and ($\mathcal{T}'',\mathcal{F}''$) are tilting torsion pairs in $\mathcal{A}$ and $\mathcal{C}$ respectively by assumption, we have that $\mathcal{T'}$ and $\mathcal{T''}$ contain all injective objects in $\mathcal{A}$ and $\mathcal{C}$ respectively by Remark \ref{rem-tilting}. Let $I$ be an injective object in $\mathcal{B}$. Since $j_{!}$ is exact, we get that $j^{*}$ preserves injective objects by \cite[Proposition 2.5]{MXZTW}. It follows that $j^{*}(I)$ is injective in $\mathcal{C}$ and $j^{*}(I)\in\mathcal{T''}$. Since $i^{!}$ is exact by assumption, by \cite[Lemma 2(2)]{MXHZY}, there exists an exact sequence
$$\xymatrix@C=15pt{0\ar[r]&i_{*}i^{!}(I)\ar[r]&I\ar[r]&
            j_{*}j^{*}(I)\ar[r]&0}$$
in $\mathcal{B}$. Notice that $i^{*}$ is right exact and $i^{*}j_{*}=0$ by Lemma \ref{lem-rec}(2)(4), applying $i^{*}$ to the above exact sequence yields an exact sequence
 $$\xymatrix@C=15pt{i^{!}(I)(\cong i^{*}i_{*}i^{!}(I))\ar[r]&i^{*}(I)\ar[r]&0}$$
in $\mathcal{A}$. Since $i^{!}$ preserves injectives by \cite[Proposition 2.5]{MXZTW}, we have that $i^{!}(I)$ is injective in $\mathcal{A}$ and $i^{!}(I)\in\mathcal{T'}$. It follows that $i^{*}(I)\in\mathcal{T'}$ since $\mathcal{T'}$ is closed under quotient objects. Thus $I\in\mathcal{T}$ and $\mathcal{T}$ contains all injective objects in $\mathcal{B}$, and hence $(\mathcal{T},\mathcal{F})$ is tilting by Remark \ref{rem-tilting}.

(2) It is similar to (1).
\end{proof}

From now on, let $\Lambda'$, $\Lambda$ and $\Lambda''$ be artin algebras such that there is a recollement as follows:
$$\xymatrix{\modu \Lambda'\ar[rr]!R|-{i_{*}}&&\ar@<-2ex>[ll]!R|-{i^{*}}
\ar@<2ex>[ll]!R|-{i^{!}}\modu \Lambda
\ar[rr]!L|-{j^{*}}&&\ar@<-2ex>[ll]!L|-{j_{!}}\ar@<2ex>[ll]!L|-{j_{*}}
\modu \Lambda''}.$$

Now we are in a position to prove our main result.
\begin{theorem}\label{rec-tilting}
 Let $T'$ and $T''$ be tilting modules in $\modu \Lambda'$ and $\modu \Lambda''$ respectively. If $i^{!}$ and $j_{!}$ are exact, then there exists a tilting $\Lambda$-module $T$ forming as $T:=j_{!}(T'')\oplus M$ for some $\Lambda$-module $M$, such that $(\mathcal{T},\mathcal{F})=(\Gen T,T^{\perp_{0}})$, where  $(\mathcal{T},\mathcal{F})$ is a glued torsion pair in $\modu \Lambda$ with respect to $(\Gen T', T'^{\perp_{0}})$ and $(\Gen T'', T''^{\perp_{0}})$.
 \end{theorem}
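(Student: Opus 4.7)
The plan is to recognise the glued torsion pair $(\mathcal{T},\mathcal{F})$ as coming from a tilting module and then to verify that $j_!(T'')$ is one of the direct summands of this tilting module. Since $T'$ and $T''$ are tilting, Remark \ref{rem-module} gives that $(\Gen T', T'^{\perp_0})$ and $(\Gen T'', T''^{\perp_0})$ are tilting torsion pairs. The module categories $\modu\Lambda$ and $\modu\Lambda''$ have enough injectives, so the exactness of $i^!$ and $j_!$ places us in the setting of Proposition \ref{prop-inj-tilt}(1), from which I conclude that the glued torsion pair $(\mathcal{T},\mathcal{F})$ is itself tilting in $\modu\Lambda$.

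Next, I invoke the classical Happel--Ringel correspondence (the module-theoretic analogue underlying Remark \ref{rem-module}) which says that over an artin algebra every tilting torsion class $\mathcal{T}$ has the form $\Gen T$ for a tilting module $T$ whose additive closure equals that of $P(\mathcal{T})$, the direct sum of one copy of each indecomposable Ext-projective object in $\mathcal{T}$; for such a $T$ one automatically has $T^{\perp_0}=\mathcal{F}$. The whole problem therefore reduces to proving that $j_!(T'')$ is Ext-projective in $\mathcal{T}$, for then each of its indecomposable summands already appears in $P(\mathcal{T})$, and I may take $M$ to be the direct sum of the remaining indecomposable Ext-projectives, obtaining a tilting module $T:=j_!(T'')\oplus M$ with $\add T=\add P(\mathcal{T})$, hence $(\Gen T,T^{\perp_0})=(\mathcal{T},\mathcal{F})$.

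To verify that $j_!(T'')\in\mathcal{T}$, note that Lemma \ref{lem-rec}(1) gives $i^{*}j_!(T'')=0\in\Gen T'$, while Lemma \ref{lem-rec}(3) yields $j^{*}j_!(T'')\cong T''\in\Gen T''$. For Ext-projectivity I will use that $j^{*}$ is exact by Lemma \ref{lem-rec}(2) and right adjoint to $j_!$, which by a standard adjunction argument forces $j_!$ to send projectives to projectives; combined with the assumed exactness of $j_!$ this yields a natural isomorphism
\[
\Ext^{1}_{\Lambda}\bigl(j_!(T''),N\bigr)\;\cong\;\Ext^{1}_{\Lambda''}\bigl(T'',j^{*}(N)\bigr)
\]
for every $N\in\modu\Lambda$, obtained by transporting a projective resolution of $T''$ along $j_!$ and invoking the adjunction. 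For $N\in\mathcal{T}$ one has $j^{*}(N)\in\Gen T''=\{X:\Ext^{1}_{\Lambda''}(T'',X)=0\}$, so the right-hand side vanishes.

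The main obstacle I anticipate is precisely this Ext-computation. Although it is a routine consequence of $(j_!,j^{*})$ being an adjoint pair with both functors exact, it is the step that converts the tilting hypothesis on $T''$ into Ext-vanishing inside $\modu\Lambda$; one must be careful to justify that $j_!$ sends a projective resolution of $T''$ in $\modu\Lambda''$ to a projective resolution of $j_!(T'')$ in $\modu\Lambda$, since everything hinges on that input.
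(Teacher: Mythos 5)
Your proof is correct, but it takes a genuinely different route from the paper. The paper constructs $M$ explicitly as a Bongartz-style universal extension: choosing a basis of $\Ext^{1}_{\Lambda}(i_{*}(T'),j_{!}(T''))$ it builds an exact sequence $0\to \bigoplus^{n} j_{!}(T'')\to M\to i_{*}(T')\to 0$, then verifies (T1) and (T2) for $T=j_{!}(T'')\oplus M$ by hand, identifies $(\mathcal{T},\mathcal{F})=(\Gen T,T^{\perp_{0}})$ through explicit Hom-computations using $i^{*}(M)\cong T'$ and $j^{*}(M)\cong \bigoplus^{n}T''$, and only at the last step (T3) invokes Proposition \ref{prop-inj-tilt} together with \cite[Theorem VI.6.5]{AISD} to compare $\add T$ with $\add P(\mathcal{T})$. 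You instead put the abstract machinery first: the glued pair is a tilting torsion pair by Proposition \ref{prop-inj-tilt}, hence equals $(\Gen P(\mathcal{T}),P(\mathcal{T})^{\perp_{0}})$, and you only need to check that $j_{!}(T'')$ lies in $\mathcal{T}$ (via $i^{*}j_{!}=0$ and $j^{*}j_{!}\cong 1$) and is $\Ext$-projective there, which your adjunction isomorphism $\Ext^{1}_{\Lambda}(j_{!}(T''),N)\cong\Ext^{1}_{\Lambda''}(T'',j^{*}(N))$ (this is exactly \cite[Proposition 2.8]{MXZTW}, legitimate since $j_{!}$ is exact by hypothesis and preserves projectives by adjunction with the exact $j^{*}$) settles, because $j^{*}(N)\in\Gen T''=\{X:\Ext^{1}_{\Lambda''}(T'',X)=0\}$ for $N\in\mathcal{T}$; then $M$ is taken to be the complementary part of $P(\mathcal{T})$. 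Your argument is shorter and avoids the universal extension entirely, at the price of losing the explicit description of $M$ (in the paper $M$ is tied to $i_{*}(T')$ and is what makes the examples in Section 4 computable), whereas the paper's construction is longer but constructive. Two small points you should make explicit: (i) a module $T$ with $\add T=\add P(\mathcal{T})$ is itself tilting (pd, $\Ext$-vanishing and the sequence in (T3) are all $\add$-invariant), which is needed since your $T$ need not be basic or equal to $P(\mathcal{T})$; (ii) it is \cite[Theorem VI.6.5]{AISD} that guarantees $P(\mathcal{T})$ is a genuine (finite) tilting module with $\Gen P(\mathcal{T})=\mathcal{T}$ once $\mathcal{T}$ contains the injectives, so your appeal to the ``Happel--Ringel correspondence'' should be routed through that statement, as the paper does.
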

 \begin{proof}
 Let $e_{1}, e_{2}, \cdots,e_{n}$ be a basis of $\Ext^{1}_{\Lambda}(i_{*}(T'),j_{!}(T''))$. Represent each $e_{i}$ by an exact sequence
 $$\xymatrix@C=15pt{0\ar[r]&j_{!}(T'')\ar[r]&E_{i}\ar[r]&i_{*}(T')\ar[r]&0}$$
 in $\modu \Lambda$.
 Consider the following commutative diagram
 \begin{align*}
 \xymatrix@C=15pt{0\ar[r]&\bigoplus^{n} j_{!}(T'')\ar@{=}[d]\ar[r]&M\ar[d]^{h}\ar[r]&i_{*}(T')\ar[d]^{\theta}
 \ar[r]&0&\\
 0\ar[r]&\bigoplus^{n} j_{!}(T'')\ar[d]^{u_{i}}\ar[r]&{\mathop  \bigoplus \limits_{1 \le i \le {\rm{n}}}} E_{i}\ar[d]^{v_{i}}\ar[r]&\bigoplus^{n} i_{*}(T')\ar[d]^{k_{i}}\ar[r]&0\\
 0\ar[r]&j_{!}(T'')\ar[r]&E_{i}\ar[r]&i_{*}(T')\ar[r]&0.}
 \end{align*}
 Because $k_{i}\theta=1$, we get the following commutative diagram
  \begin{align*}
 \xymatrix@C=15pt{0\ar[r]&\bigoplus^{n} j_{!}(T'')\ar[d]^{u_{i}}\ar[r]&M\ar[d]^{v_{i}h}\ar[r]&i_{*}(T')
 \ar@{=}[d]\ar[r]&0&(*)\\
 0\ar[r]&j_{!}(T'')\ar[r]&E_{i}\ar[r]&i_{*}(T')\ar[r]&0,}
 \end{align*}
where the module $M$ is some kind of
universal extension (see \cite[Lemma \RNum{3}.6.1]{Happel}).

 Applying the functor $\Hom_{\Lambda}(-,j_{!}(T''))$ to the exact sequence $(*)$ yields an exact sequence
 \begin{align*}{\footnotesize
 \xymatrix@C=15pt{\cdots\ar[r]&\Hom_{\Lambda}(\bigoplus^{n} j_{!}(T''),j_{!}(T''))\ar[r]^-{\delta}&\Ext_{\Lambda}^{1}(i_{*}(T'),j_{!}(T''))
 \ar[r]&\Ext_{\Lambda}^{1}(M,j_{!}(T''))\ar[r]&\Ext_{\Lambda}^{1}(\bigoplus^{n} j_{!}(T''),j_{!}(T''))\ar[r]&\cdots}}
 \end{align*}
 with $\delta$ epic.
 Since $j_{!}$ is exact and $j_{!}$ preserves projective objects by assumption and \cite[Proposition 2.5]{MXZTW}, it follows from \cite[Proposition 2.8]{MXZTW} that
 $$ \Ext_{\Lambda}^{1}(j_{!}(T''),j_{!}(T''))\cong \Ext_{\Lambda''}^{1}(T'',j^{*}j_{!}(T''))\cong \Ext_{\Lambda''}^{1}(T'',T'')=0,$$
 and hence $\Ext_{\Lambda}^{1}(\bigoplus^{n} j_{!}(T''),j_{!}(T''))\cong \bigoplus^{n} \Ext_{\Lambda}^{1}(j_{!}(T''),j_{!}(T''))=0$.
  Therefore, $\Ext_{\Lambda}^{1}(M,j_{!}(T''))=0$ from the fact that $\delta$ is epic.

Taking $T:=j_{!}(T'')\oplus M$. We next prove that $T$ is the desired tilting $\Lambda$-module by dividing into four steps.

$\textbf{Step 1.}$ $T$ satisfies the condition $\textbf{(T1)}$:

Since $j_{!}$ is exact by assumption and $j_{!}$ preserves projectives by \cite[Proposition 2.5]{MXZTW}, $\pd _{\Lambda''}T''\leq 1$ implies $\pd _{\Lambda}j_{!}(T'')\leq 1$ and $\pd _{\Lambda}\bigoplus^{n}j_{!}(T'')\leq 1$.
 Since $i^{!}$ is exact by assumption, we have that $i_{*}$ preserves projectives by \cite[Proposition 2.6]{MXZTW}.
 Note that $i_{*}$ is exact by Lemma \ref{lem-rec}(2) and $\pd _{\Lambda'}T'\leq 1$, so $\pd _{\Lambda}i_{*}(T')\leq 1$.
 Then we have $\pd_{\Lambda}M\leq 1$ by the exact sequence $(*)$, and thus $\pd_{\Lambda}T\leq 1$.

$\textbf{Step 2.}$ $T$ satisfies the condition $\textbf{(T2)}$:

By the above arguments, we have
\begin{align*}
\Ext_{\Lambda}^{1}(T,T)&=\Ext_{\Lambda}^{1}( j_{!}(T'')\oplus M, j_{!}(T'')\oplus M)\\
&=\Ext_{\Lambda}^{1}(j_{!}(T''),j_{!}(T''))\oplus
\Ext_{\Lambda}^{1}(M,j_{!}(T''))
\oplus\Ext_{\Lambda}^{1}(j_{!}(T''),M)\oplus\Ext_{\Lambda}^{1}(M,M)\\
&=\Ext_{\Lambda}^{1}(j_{!}(T''),M)\oplus\Ext_{\Lambda}^{1}(M,M).
\end{align*}
Applying the exact functor $j^{*}$ to the exact sequence $(*)$ yields that $$j^{*}(M)\cong j^{*}({\bigoplus}^{n}j_{!}(T''))\cong {\bigoplus}^{n}j^{*}j_{!}(T'')\cong {\bigoplus} ^{n}T''.$$
Since $j_{!}$ is exact and $j_{!}$ preserves projectives  by assumption and \cite[Proposition 2.5]{MXZTW}, it follows from \cite[Proposition 2.8]{MXZTW} that $$\Ext_{\Lambda}^{1}(j_{!}(T''),M)\cong \Ext_{\Lambda''}^{1}(T'',j^{*}(M))\cong \Ext_{\Lambda''}^{1}(T'',{\bigoplus} ^{n}T'')=0.$$
On the other hand, applying the functors $\Hom_{\Lambda}(-,i_{*}(T'))$ and $\Hom_{\Lambda}(M,-)$ to the exact sequence $(*)$, we have the following exact sequences
$$
 \xymatrix@C=15pt{\cdots\ar[r]&\Ext_{\Lambda}^{1}(i_{*}(T'),i_{*}(T'))
 \ar[r]&\Ext_{\Lambda}^{1}(M,i_{*}(T'))\ar[r]&\Ext_{\Lambda}^{1}( \bigoplus^{n}j_{!}(T''),i_{*}(T'))\ar[r]&\cdots},$$
 and
 $$
  \xymatrix@C=15pt{\cdots\ar[r]&\Ext_{\Lambda}^{1}(M,\bigoplus^{n}j_{!}(T''))
 \ar[r]&\Ext_{\Lambda}^{1}(M,M)\ar[r]&\Ext_{\Lambda}^{1}( M,i_{*}(T'))\ar[r]&\cdots}.
$$
By \cite[Remark 3.7(1)]{PC14H}, we have $ \Ext_{\Lambda}^{1}(i_{*}(T'),i_{*}(T'))\cong \Ext_{\Lambda'}^{1}(T',T')=0$.
 Since $j_{!}$ is exact and preserves projectives, and $\Ima i_{*}=\Ker j^{*}$, it follows from \cite[Proposition 2.8]{MXZTW} that
$$ \Ext_{\Lambda}^{1}(j_{!}(T''),i_{*}(T'))\cong \Ext_{\Lambda''}^{1}(T'',j^{*}i_{*}(T'))=0.$$
Thus $\Ext_{\Lambda}^{1}(M,i_{*}(T'))=0$. Moreover, since $\Ext_{\Lambda}^{1}(M,\bigoplus^{n}j_{!}(T''))\cong \bigoplus^{n}\Ext_{\Lambda}^{1}(M,j_{!}(T''))=0$, we have $\Ext_{\Lambda}^{1}(M,M)=0$. So $\Ext_{\Lambda}^{1}(T,T)=0$.

Thus $T$ is a partial tilting module.

$\textbf{Step 3.}$ $(\mathcal{T},\mathcal{F})=(\Gen T,T^{\perp_{0}})$:

By \cite[Lemma \RNum{6}.2.3]{AISD}, we have that $T$ is $\Ext$-projective in $\Gen T$ and $(\Gen T,T^{\perp_{0}})$ is a torsion pair in $\modu \Lambda$.
For a $\Lambda$-module $Y$, by \cite[Lemma 2(2)]{MXHZY}, we have the following exact sequence
 $$\xymatrix@C=15pt{0\ar[r]&i_{*}i^{!}(Y)\ar[r]&Y\ar[r]&
j_{*}j^{*}(Y)\ar[r]&0}$$
in $\modu \Lambda$.
Applying the functor $\Hom_{\Lambda}(M,-)$ to the above exact sequence yields an exact sequence
\begin{align}\label{M-}
\xymatrix@C=15pt{0\ar[r]&\Hom_{\Lambda}(M,i_{*}i^{!}(Y))\ar[r]&\Hom_{\Lambda}(M,Y)
\ar[r]&\Hom_{\Lambda}(M,j_{*}j^{*}(Y))}.
\end{align}

Let $Y\in\mathcal{F}$, that is, $j^{*}(Y)\in T''^{\perp_{0}}$ and $i^{!}(Y)\in T'^{\perp_{0}}$. Then
\begin{align*}
\Hom_{\Lambda}(T,Y)=&\Hom_{\Lambda}(j_{!}(T'')\oplus M,Y)\\
=&\Hom_{\Lambda}(j_{!}(T''),Y)\oplus\Hom_{\Lambda}(M,Y)\\
\cong&\Hom_{\Lambda''}(T'',j^{*}(Y))\oplus \Hom_{\Lambda}(M,Y)\\
=&\Hom_{\Lambda}(M,Y).
\end{align*}
Since $i^{*}j_{!}=0$ and $i^{*}$ is right exact by Lemma $\ref{lem-rec}$(1)(2), applying the functor $i^{*}$ to the exact sequence $(*)$ yields $i^{*}(M)\cong i^{*}i_{*}(T')\cong T'$.

Since
$$\Hom_{\Lambda}(M,i_{*}i^{!}(Y))\cong \Hom_{\Lambda'}(i^{*}(M),i^{!}(Y))\cong \Hom_{\Lambda'}(T',i^{!}(Y))=0$$
 and
  $$\Hom_{\Lambda}(M,j_{*}j^{*}(Y))\cong \Hom_{\Lambda''}(j^{*}(M),j^{*}(Y))\cong\Hom_{\Lambda''}(\bigoplus {^{n}} T'',j^{*}(Y))=0,$$
 it follows that $\Hom_{\Lambda}(M,Y)=0$ from the exact sequence (\ref{M-}), and hence $\Hom_{\Lambda}(T,Y)=0$.
Thus $Y\in T^{\perp_{0}}$ and $\mathcal{F}\subseteq T^{\perp_{0}}$.

Conversely, let $Y\in T^{\perp_{0}}$, that is, $0=\Hom_{\Lambda}(T,Y)\cong \Hom_{\Lambda}(j_{!}(T''),Y)\oplus\Hom_{\Lambda}(M,Y)$, so $\Hom_{\Lambda}(j_{!}(T''),Y)=0$ and $\Hom_{\Lambda}(M,Y)=0$. It follows that $$\Hom_{\Lambda''}(T'',j^{*}(Y))\cong\Hom_{\Lambda}(j_{!}(T''),Y)=0$$
and $$\Hom_{\Lambda'}(T',i^{!}(Y))\cong\Hom_{\Lambda'}(i^{*}(M),i^{!}(Y))\cong \Hom_{\Lambda}(M,i_{*}i^{!}(Y))=0,$$
the last equality is obtained from the exact sequence (\ref{M-}).
Thus $j^{*}(Y)\in T''^{\perp_{0}}$ and $i^{!}(Y)\in T'^{\perp_{0}}$, and hence $Y\in\mathcal{F}$ and $T^{\perp_{0}}\subseteq\mathcal{F}$. Therefore,  $\mathcal{F}=T^{\perp_{0}}$ and $\mathcal{T}=\Gen T$, that is $(\mathcal{T},\mathcal{F})=(\Gen T,T^{\perp_{0}})$.

$\textbf{Step 4.}$ $T$ satisfies the condition $\textbf{(T3)}$:

By Remark \ref{rem-module}, $(\Gen T', T'^{\perp_{0}})$ and $(\Gen T'', T''^{\perp_{0}})$ are tilting torsion pairs in $\modu \Lambda'$ and $\Lambda''$ respectively. By Proposition \ref{prop-inj-tilt},  $(\mathcal{T},\mathcal{F})$ is a tilting torsion pair in $\modu \Lambda$,
and thus, by Remark \ref{rem-tilting},  $\mathcal{T}$ contains all injective objects in $\modu \Lambda$.
Hence, by \cite[Theorem \RNum{6}.6.5]{AISD},
 there exists a tilting module $\overline{T}=P(\mathcal{T})$ such that
 $$(\mathcal{T},\mathcal{F})=(\Gen \overline{T},\overline{T}^{\perp_{0}})=(\mathcal{T}(\overline{T}),\overline{T}^{\perp_{0}}).$$ Since $T$ is $\Ext$-projective in $\mathcal{T}$, we have $T\in \add \overline{T}$ and $\add T\subseteq \add \overline{T}$.
 Moreover, since $\overline{T}\in \mathcal{T}=\Gen T$, there is an exact sequence
 \begin{align}\label{3.2}
 \xymatrix@C=15pt{0\ar[r]&K\ar[r]&T^{n}\ar[r]&\overline{T}\ar[r]&0}
 \end{align}
in $\modu \Lambda$.

Let $X\in \mathcal{T}=\mathcal{T}(\overline{T})$.
Applying the functor $\Hom_{\Lambda}(-,X)$ to the exact sequence (\ref{3.2}) yields the following exact sequence
\begin{align*}
\xymatrix@C=15pt{\cdots\ar[r]&
\Ext^{1}_{\Lambda}(T^{n},X)
\ar[r]&\Ext^{1}_{\Lambda}(K,X)\ar[r]&\Ext^{2}_{\Lambda}(\overline{T},X)\ar[r]&\cdots}.
\end{align*}
Since $\pd \overline{T}\leq 1$ and $T$ is $\Ext$-projective in $\Gen T=\mathcal{T}$, we have $\Ext^{1}_{\Lambda}(K,X)=0$ and $K$ is $\Ext$-projective in $\mathcal{T}(\overline{T})$, it follows that $K\in \add \overline{T}$ from \cite[Theorem \RNum{6}.2.5]{AISD}.
Thus the exact sequence (\ref{3.2}) is split, so $\overline{T}\in \add T$ and $\add \overline{T}\subseteq \add T$.
Then $\add \overline{T}= \add T$, and it follows that $T$ satisfies the condition $(\textbf{T3})$.
Thus $T$ is a tilting $\Lambda$-module.
 \end{proof}

Now we show that the converse of Proposition \ref{prop-inj-tilt} holds true under certain conditions, which is also a special case of \cite[Proposition 1(2)]{MXHZY}. 
\begin{proposition}\label{tilting}
Let $(\mathcal{A},\mathcal{B},\mathcal{C})$ be a recollement of abelian categories as (\ref{re}), and let $(\mathcal{T},\mathcal{F})$ be a torsion pair in $\mathcal{B}$.
\begin{itemize}
\item[(1)] Assume that $\mathcal{B}$ and $\mathcal{C}$ have enough injective objects, and $(\mathcal{T},\mathcal{F})$ is tilting, we have
\begin{itemize}
\item[(1.1)] If $i^{*}$ is exact, then $(i^{*}(\mathcal{T}),i^{!}(\mathcal{F}))$ is a tilting torsion pair in $\mathcal{A}$.
\item[(1.2)] If $j_{*}j^{*}(\mathcal{F})\subseteq\mathcal{F}$, then $(j^{*}(\mathcal{T}),j^{*}(\mathcal{F}))$ is a tilting torsion pair in $\mathcal{C}$.
    \end{itemize}
    \item[(2)] Assume that $\mathcal{B}$ and $\mathcal{C}$ have enough projective objects, and $(\mathcal{T},\mathcal{F})$ is cotilting, we have
\begin{itemize}
\item[(2.1)] If $i^{!}$ is exact, then $(i^{*}(\mathcal{T}),i^{!}(\mathcal{F}))$ is a cotilting torsion pair in $\mathcal{A}$.
\item[(2.2)] If $j_{*}j^{*}(\mathcal{F})\subseteq\mathcal{F}$, then $(j^{*}(\mathcal{T}),j^{*}(\mathcal{F}))$ is a cotilting torsion pair in $\mathcal{C}$.
    \end{itemize}
 \end{itemize}
\end{proposition}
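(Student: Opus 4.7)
The plan is to parallel the strategy of Proposition \ref{prop-inj-tilt}: once the candidate pairs are known to be torsion pairs, Remark \ref{rem-tilting} reduces the (co)tilting property to checking that the torsion class contains all injectives (dually, the torsion-free class contains all projectives). The torsion-pair assertion itself is already contained in \cite[Proposition 1(2)]{MXHZY}, so I would cite it; alternatively one may verify it by hand, using $i^{*}$ applied to the torsion decomposition of $i_{*}(A)$ for $A\in\mathcal{A}$, and $j^{*}$ applied to the torsion decomposition of any $B\in\mathcal{B}$ with $j^{*}(B)\cong C$ (recall $j^{*}$ is dense by Lemma \ref{lem-rec}(3)). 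The condition $j_{*}j^{*}(\mathcal{F})\subseteq\mathcal{F}$ in (1.2) and (2.2) is precisely what supplies the Hom-vanishing there, via the adjunction isomorphism $\Hom_{\mathcal{C}}(j^{*}(T),j^{*}(F))\cong \Hom_{\mathcal{B}}(T,j_{*}j^{*}(F))$.

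For (1.1), I would first invoke \cite[Proposition 2.5]{MXZTW} to ensure that $\mathcal{A}$ has enough injective objects, so Remark \ref{rem-tilting} is applicable. The exactness of $i^{*}$ together with the adjoint pair $(i^{*},i_{*})$ forces $i_{*}$ to preserve injective objects. Hence for any injective $I\in\mathcal{A}$, the object $i_{*}(I)$ is injective in $\mathcal{B}$, so $i_{*}(I)\in\mathcal{T}$ by the tilting hypothesis on $(\mathcal{T},\mathcal{F})$ and Remark \ref{rem-tilting}. The counit isomorphism $i^{*}i_{*}\cong 1_{\mathcal{A}}$ from Lemma \ref{lem-rec}(3) then gives $I\cong i^{*}i_{*}(I)\in i^{*}(\mathcal{T})$, so $i^{*}(\mathcal{T})$ contains all injectives of $\mathcal{A}$, and a second application of Remark \ref{rem-tilting} concludes that $(i^{*}(\mathcal{T}),i^{!}(\mathcal{F}))$ is tilting.

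For (1.2), the argument is symmetric and slightly easier: since $j^{*}$ is automatically exact by Lemma \ref{lem-rec}(2), the right adjoint $j_{*}$ automatically preserves injective objects. Hence for any injective $I\in\mathcal{C}$ one has $j_{*}(I)\in\mathcal{T}$, and the natural isomorphism $j^{*}j_{*}\cong 1_{\mathcal{C}}$ from Lemma \ref{lem-rec}(3) gives $I\cong j^{*}j_{*}(I)\in j^{*}(\mathcal{T})$.

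Part (2) then follows by a complete dualization: replace injectives by projectives, the tilting characterization in Remark \ref{rem-tilting} by the cotilting one, the exactness of $i^{*}$ by that of $i^{!}$ (so that the left adjoint $i_{*}$ of $i^{!}$ preserves projective objects), and use that $j_{!}$ automatically preserves projective objects because $j^{*}$ is always exact. The main---essentially the only---obstacle I anticipate is the bookkeeping required if one wishes to verify the torsion-pair conditions independently of \cite{MXHZY}; once that structure is in place, each injective- or projective-containment check is a clean one-line adjunction argument, exactly as in the proof of Proposition \ref{prop-inj-tilt}.
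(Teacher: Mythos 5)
Your proposal is correct and follows essentially the same route as the paper: delegate the torsion-pair assertions to the results of Ma--Huang, then use Remark \ref{rem-tilting} together with the adjunction facts that $i_{*}$ preserves injectives when $i^{*}$ is exact (resp.\ projectives when $i^{!}$ is exact) and that $j_{*}$ (resp.\ $j_{!}$) preserves injectives (resp.\ projectives) since $j^{*}$ is exact, so each injective or projective of $\mathcal{A}$ or $\mathcal{C}$ lands in $i^{*}(\mathcal{T})$, $j^{*}(\mathcal{T})$, $i^{!}(\mathcal{F})$ or $j^{*}(\mathcal{F})$ via the unit/counit isomorphisms. The paper's proof does exactly this, including the preliminary appeal to \cite[Proposition 2.5]{MXZTW} to guarantee that $\mathcal{A}$ has enough injectives (resp.\ projectives), so no further comparison is needed.
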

\begin{proof}
Assume that $\mathcal{B}$ and $\mathcal{C}$ have enough projective (resp. injective) objects, we have that $\mathcal{A}$ has enough projective (resp. injective) objects by
\cite[Proposition 2.5]{MXZTW}.

(1)
Since $(\mathcal{T},\mathcal{F})$ is tilting by assumption, by Remark \ref{rem-tilting}, we have that $\mathcal{T}$ contains all injective objects in $\mathcal{B}$.

(1.1) By \cite[Theorem 2(1)]{MXHZY},
 $(i^{*}(\mathcal{T}),i^{!}(\mathcal{F}))$ is a torsion pair in $\mathcal{A}$.
Let $I$ be an injective object in $\mathcal{A}$. Since $i_{*}$ preserves injectives by \cite[Proposition 2.6]{MXZTW},  $i_{*}(I)$ is injective in $\mathcal{B}$, and so $i_{*}(I)\in \mathcal{T}$. Thus $I\cong i^{*}i_{*}(I)\in i^{*}(\mathcal{T})$ and $i^{*}(\mathcal{T})$ contains all injective objects in $\mathcal{A}$,
and hence $(i^{*}(\mathcal{T}),i^{!}(\mathcal{F}))$ is tilting by Remark \ref{rem-tilting}.

(1.2) By \cite[Theorem 2(2)]{MXHZY},  $(j^{*}(\mathcal{T}),j^{*}(\mathcal{F}))$ is a torsion pair in $\mathcal{C}$. Let $I$ be an injective object in $\mathcal{C}$.
 Since $j_{*}$ preserves injectives by \cite[Proposition 2.5]{MXZTW}, we have $j_{*}(I)$ is injecitve in $\mathcal{B}$, and hence $j_{*}(I)\in \mathcal{T}$.
 Thus $I\cong j^{*}j_{*}(I)\in j^{*}(\mathcal{T})$ and $j^{*}(\mathcal{T})$ contains all injective objects in $\mathcal{C}$, and hence $(j^{*}(\mathcal{T}),j^{*}(\mathcal{F}))$ is tilting.

(2) It is similar to (1).
 \end{proof}

Now we show that the converse of Theorem \ref{rec-tilting} holds true under certain conditions, which reformulates \cite[Theorem 3.5]{MXHZYT}.

\begin{theorem}\label{con-thm-t}
Let $T$ be a tilting $\Lambda$-module and $(\mathcal{T},\mathcal{F}):=(\Gen T,T^{\perp_{0}})$ a torsion pair in $\modu \Lambda$ induced by $T$. Then we have
\begin{itemize}
\item[(1)] If $i^{*}$ is exact, then $i^{*}(T)$ is a tilting $\Lambda'$-module and $(i^{*}(\mathcal{T}),i^{!}(\mathcal{F}))=(\Gen i^{*}(T),(i^{*}(T))^{\perp_{0}})$.
\item[(2)] If $j_{*}j^{*}(\mathcal{F})\subseteq \mathcal{F}$, $j_{*}j^{*}(\mathcal{T})\subseteq\mathcal{T}$ and $j_{*}$ is exact,
then $j^{*}(T)$ is a tilting $\Lambda''$-module and $ (j^{*}(\mathcal{T}),j^{*}(\mathcal{F}))=(\Gen j^{*}(T),(j^{*}(T))^{\perp_{0}})$.
\end{itemize}
\end{theorem}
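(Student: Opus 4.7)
The plan is to verify the three defining conditions (T1), (T2), (T3) of a tilting module for $i^{*}(T)$ and $j^{*}(T)$ separately, and then to identify the required torsion pairs by combining Proposition \ref{tilting} with basic closure properties of torsion classes and the tilting theory collected in Remarks \ref{rem-tilting} and \ref{rem-module}.

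For part (1), exactness of $i^{*}$ (together with the always-exact $i_{*}$) guarantees that $i^{*}$ preserves projective objects and that one has the Ext-adjunction $\Ext^{1}_{\Lambda'}(i^{*}(T),i^{*}(T))\cong \Ext^{1}_{\Lambda}(T,i_{*}i^{*}(T))$. Condition (T1) follows by applying $i^{*}$ to a projective resolution of $T$ of length at most one. For (T2), I would use the canonical exact sequence $j_{!}j^{*}(T)\to T\to i_{*}i^{*}(T)\to 0$ coming from the recollement to see that $i_{*}i^{*}(T)$ is a quotient of $T$, hence lies in $\mathcal{T}=\Gen T$, and is therefore annihilated by $\Ext^{1}_{\Lambda}(T,-)$ by the tilting property. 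For (T3), applying the exact functor $i^{*}$ to $0\to \Lambda\to T_{0}\to T_{1}\to 0$ produces the desired sequence in $\modu\Lambda'$ with summands in $\add i^{*}(T)$, since $i^{*}(\Lambda)\cong \Lambda'$ in a recollement of module categories. The torsion pair identity $(i^{*}(\mathcal{T}),i^{!}(\mathcal{F}))=(\Gen i^{*}(T),(i^{*}(T))^{\perp_{0}})$ then follows by showing $i^{*}(\mathcal{T})=\Gen i^{*}(T)$: the inclusion $i^{*}(\mathcal{T})\subseteq\Gen i^{*}(T)$ is obtained by applying right exact $i^{*}$ to an epimorphism $T^{n}\to Y$ with $Y\in\Gen T$, while the reverse inclusion uses that $i^{*}(\mathcal{T})$ is a torsion class by Proposition \ref{tilting}(1.1), hence closed under quotients, and contains $i^{*}(T)$.

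For part (2), the assumption that $j_{*}$ is exact ensures that $j^{*}$ preserves projectives and that the Ext-adjunction $\Ext^{1}_{\Lambda''}(j^{*}(T),j^{*}(T))\cong \Ext^{1}_{\Lambda}(T,j_{*}j^{*}(T))$ is available. Conditions (T1) and (T2) follow as in (1), the latter using $j_{*}j^{*}(\mathcal{T})\subseteq \mathcal{T}$ together with the tilting property of $T$. The identity $j^{*}(\mathcal{T})=\Gen j^{*}(T)$ is established exactly as in part (1), now invoking Proposition \ref{tilting}(1.2); combining this with $(j^{*}(T))^{\perp_{0}}=(\Gen j^{*}(T))^{\perp_{0}}$ yields the full torsion pair identity, which is a tilting torsion pair in $\modu \Lambda''$.

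The main obstacle is condition (T3) for $j^{*}(T)$: unlike in part (1), the module $j^{*}(\Lambda)$ is not in general isomorphic to $\Lambda''$, so applying $j^{*}$ to $0\to \Lambda\to T_{0}\to T_{1}\to 0$ does not directly yield the needed sequence. To bypass this, I would import the Bongartz-style argument of Step 4 in the proof of Theorem \ref{rec-tilting}. By \cite[Theorem VI.6.5]{AISD}, the tilting torsion pair $(\Gen j^{*}(T),(j^{*}(T))^{\perp_{0}})$ is induced by some tilting module $\bar{T}$, so that $\Gen \bar{T}=\Gen j^{*}(T)$. Since $\pd j^{*}(T)\leq 1$ and $\Ext^{1}_{\Lambda''}(j^{*}(T),j^{*}(T))=0$, the partial tilting module $j^{*}(T)$ is $\Ext$-projective in $\Gen j^{*}(T)$, hence $j^{*}(T)\in \add \bar{T}$. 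Writing an epimorphism $j^{*}(T)^{n}\to \bar{T}$ with kernel $K$ and applying $\Hom_{\Lambda''}(-,X)$ for $X\in\Gen \bar{T}$, one uses $\pd \bar{T}\leq 1$ together with $\Ext$-projectivity of $j^{*}(T)$ to conclude that $K$ is $\Ext$-projective in $\Gen \bar{T}$, hence $K\in \add \bar{T}$; the sequence then splits, giving $\bar{T}\in \add j^{*}(T)$. Therefore $\add \bar{T}=\add j^{*}(T)$ and condition (T3) for $\bar{T}$ transfers to $j^{*}(T)$.
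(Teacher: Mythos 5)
Your part (2) is essentially the paper's own argument: use Proposition \ref{tilting} to see that $(j^{*}(\mathcal{T}),j^{*}(\mathcal{F}))$ is a tilting torsion pair, identify $j^{*}(\mathcal{T})=\Gen j^{*}(T)$, take the tilting module $P(j^{*}(\mathcal{T}))$ supplied by \cite[Theorem \RNum{6}.6.5]{AISD}, and run the splitting argument to obtain $\add$-equality. (The paper gets $\Ext$-projectivity of $j^{*}(T)$ in $j^{*}(\mathcal{T})$ from $\Ext^{1}_{\Lambda''}(j^{*}(T),j^{*}(\mathcal{T}))\cong\Ext^{1}_{\Lambda}(T,j_{*}j^{*}(\mathcal{T}))=0$, while you get it from the partial tilting properties of $j^{*}(T)$; both work, and your (T1)/(T2) verifications via the adjunction, which the paper does not even need, are correct.) In part (1) the paper runs the same uniform Bongartz-style argument with $\overline{T'}=P(i^{*}(\mathcal{T}))$, whereas you attempt the shortcut of checking (T1)--(T3) directly. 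Your (T1) and (T2) are fine (the epimorphism $T\to i_{*}i^{*}(T)$ together with $\Gen T=\{X\mid\Ext^{1}_{\Lambda}(T,X)=0\}$ does the job), but your (T3) step has a gap.

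The gap is the claim $i^{*}(\Lambda)\cong\Lambda'$. For an abstract recollement as in Definition \ref{def-rec} this need not hold: one only knows that $i^{*}(\Lambda)$ is a finitely generated projective generator of $\modu\Lambda'$ (projective because the right adjoint $i_{*}$ is exact; a generator because an epimorphism $\Lambda^{n}\twoheadrightarrow i_{*}(X')$ gives, after applying the right exact $i^{*}$ and $i^{*}i_{*}\cong 1$, an epimorphism $i^{*}(\Lambda)^{n}\twoheadrightarrow X'$). Composing $i_{*},i^{*},i^{!}$ with a Morita self-equivalence of $\modu\Lambda'$ attached to a non-free progenerator yields a recollement in which $i^{*}(\Lambda)$ is not the regular module, so applying $i^{*}$ to $0\to\Lambda\to T_{0}\to T_{1}\to 0$ only produces $0\to P\to i^{*}(T_{0})\to i^{*}(T_{1})\to 0$ with $P$ a progenerator, which is not literally (T3). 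The repair is routine: either drop the shortcut and run in part (1) the same $P(i^{*}(\mathcal{T}))$-splitting argument you already use in part (2) (this is what the paper does; your (T1)/(T2) give exactly the needed $\Ext$-projectivity of $i^{*}(T)$ in $\Gen i^{*}(T)=i^{*}(\mathcal{T})$), or keep the shortcut but argue with the progenerator: for any $X$ with $\Ext^{1}_{\Lambda'}(i^{*}(T),X)=0$, applying $\Hom_{\Lambda'}(-,X)$ to the displayed sequence shows every map $P\to X$ factors through $i^{*}(T_{0})\in\add i^{*}(T)$, hence $X\in\Gen i^{*}(T)$ since $P$ generates, which together with (T1) and (T2) forces $i^{*}(T)$ to be tilting; or normalize the recollement to the standard idempotent one via \cite{PCVJ14R}. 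With that fix the proof is complete; the torsion-pair identifications in both parts are correct as written.
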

\begin{proof}
Since $(\mathcal{T},\mathcal{F})$ is a torsion pair induced by a tilting $\Lambda$-module $T$, we have that $\mathcal{T}$ contains all injective $\Lambda$-modules by \cite[Theorem \RNum{6}.6.5]{AISD}. It follows from Remark \ref{rem-tilting} that $(\mathcal{T},\mathcal{F})$ is a tilting torsion pair in $\modu \Lambda$.

(1) By Proposition \ref{tilting}(1), we have that $(i^{*}(\mathcal{T}),i^{!}(\mathcal{F}))$ is a tilting torsion pair in $\modu \Lambda'$, it follows that $i^{*}(\mathcal{T})$ contains all injective $\Lambda'$-modules from Remark \ref{rem-tilting}.
Since $i^{*}(T)\in i^{*}(\mathcal{T})$, we have that $\Gen i^{*}(T)\subseteq i^{*}(\mathcal{T})$ by the fact that $i^{*}(\mathcal{T})$ is closed under quotient objects.
On the other hand, it is clear that
$i^{*}(\mathcal{T})\subseteq \Gen i^{*}(\mathcal{T})$. Thus
$\Gen i^{*}(\mathcal{T})= i^{*}(\mathcal{T})$.

By \cite[Theorem \RNum{6}.6.5]{AISD}, there exists a tilting module $\overline{T'}=P(i^{*}(\mathcal{T}))$ such that
 $$(i^{*}(\mathcal{T}),i^{!}(\mathcal{F}))=(\Gen \overline{T'},\overline{T'}^{\perp_{0}})=(\mathcal{T}(\overline{T'}),\overline{T'}^{\perp_{0}}).$$
Notice that $i_{*}i^{*}(\mathcal{T})\subseteq \mathcal{T}$. By \cite[Proposition 2.8]{MXZTW}, we have
$$\Ext_{\Lambda'}^{1}(i^{*}(T),i^{*}(\mathcal{T}))\cong \Ext_{\Lambda}^{1}(T,i_{*}i^{*}(\mathcal{T}))=0,$$
that is, $i^{*}(T)$ is $\Ext$-projective in $i^{*}(\mathcal{T})$,  thus $i^{*}(T)\in \add \overline{T'}$ and $\add i^{*}(T)\subseteq \add \overline{T'}$.
Since $\overline{T'}\in i^{*}(\mathcal{T})=\Gen i^{*}(T)$, there exists an exact sequence
 \begin{align}\label{3.3}
 \xymatrix@C=15pt{0\ar[r]&K'\ar[r]&i^{*}(T)^{n}\ar[r]&\overline{T'}\ar[r]&0}
 \end{align}
in $\modu \Lambda'$.
Let $X'\in i^{*}(\mathcal{T})=\mathcal{T}(\overline{T'})$. Applying the functor $\Hom_{\Lambda'}(-,X')$ to the exact sequence (\ref{3.3}) yields the following exact sequence
\begin{align*}
\xymatrix@C=15pt{\cdots\ar[r]&
\Ext^{1}_{\Lambda'}(i^{*}(T)^{n},X')
\ar[r]&\Ext^{1}_{\Lambda'}(K',X')\ar[r]&\Ext^{2}_{\Lambda'}(\overline{T'},X')\ar[r]&\cdots}.
\end{align*}
Since $\pd \overline{T'}\leq 1$ and $i^{*}(T)$ is $\Ext$-projective in $i^{*}(\mathcal{T})$, we have that $\Ext^{1}_{\Lambda'}(K',X')=0$ and $K'$ is $\Ext$-projective in $\mathcal{T}(\overline{T'})$, and hence $K'\in \add \overline{T'}$ from \cite[Theorem \RNum{6}.2.5]{AISD}.
 Thus the exact sequence (\ref{3.3}) is split, which induces $\overline{T'}\in \add i^{*}(T)$ and $\add \overline{T'}\subseteq \add i^{*}(T)$.
Then we have that $\add \overline{T'}= \add i^{*}(T)$, and hence $i^{*}(T)$ is a tilting $\Lambda'$-module.

(2) Since $j_{*}j^{*}(\mathcal{F})\subseteq\mathcal{F}$ by assumption, $(j^{*}(\mathcal{T}),j^{*}(\mathcal{F}))$ is a tilting torsion pair in $\modu \Lambda''$ by Proposition \ref{tilting}(2).
By Remark \ref{rem-tilting}, we have that $j^{*}(\mathcal{T})$ contains all injective $\Lambda''$-modules.

Since $j^{*}(T)\in j^{*}(\mathcal{T})$ and $j^{*}(\mathcal{T})$ is closed under quotient objects, we have $\Gen j^{*}(T)\subseteq j^{*}(\mathcal{T})$. On the other hand, it is clear that $j^{*}(\mathcal{T})\subseteq \Gen j^{*}(T)$. Thus $\Gen j^{*}(T)= j^{*}(\mathcal{T})$.

By \cite[Theorem \RNum{6}.6.5]{AISD}, there exists a tilting $\Lambda''$-module $\overline{T''}=P(j^{*}(\mathcal{T}))$ such that $$(j^{*}(\mathcal{T}),j^{*}(\mathcal{F}))=(\Gen \overline{T''},{\overline{T''}}^{\perp_{0}})=(\mathcal{T}(\overline{T''}),
{\overline{T''}}^{\perp_{0}}).$$
Since $j_{*}j^{*}(\mathcal{T})\subseteq \mathcal{T}$ and $j_{*}$ is exact by assumption and \cite[Proposition 2.8]{MXZTW}, we have
$$\Ext_{\Lambda''}(j^{*}(T),j^{*}(\mathcal{T}))=\Ext_{\Lambda}(T,j_{*}j^{*}(\mathcal{T}))=0,$$
that is, $j^{*}(T)$ is $\Ext$-projective in $j^{*}(\mathcal{T})$. Thus $j^{*}(T)\in \add \overline{T''}$ and $\add j^{*}(T)\subseteq \add \overline{T''}$.

Since $\overline{T''}\in j^{*}(\mathcal{T})=\Gen j^{*}(T)$, there exists an exact sequence
 \begin{align}\label{3.4}
 \xymatrix@C=15pt{0\ar[r]&K''\ar[r]&j^{*}(T)^{m}\ar[r]&\overline{T''}\ar[r]&0}
 \end{align}
in $\modu \Lambda''$.
Let $X''\in j^{*}(\mathcal{T})=\mathcal{T}(\overline{T''})$. Applying the functor $\Hom_{\Lambda''}(-,X'')$ to the exact sequence (\ref{3.4}) yields the following exact sequence
\begin{align*}
\xymatrix@C=15pt{\cdots\ar[r]&
\Ext^{1}_{\Lambda''}(j^{*}(T)^{m},X'')
\ar[r]&\Ext^{1}_{\Lambda''}(K'',X'')\ar[r]&\Ext^{2}_{\Lambda''}(\overline{T''},X'')\ar[r]&\cdots}.
\end{align*}
Since $\pd \overline{T''}\leq 1$ and $j^{*}(T)$ is $\Ext$-projective in $j^{*}(\mathcal{T})$, we have that $\Ext^{1}_{\Lambda''}(K'',X'')=0$ and $K''$ is $\Ext$-projective in $\mathcal{T}(\overline{T''})$, it follows that $K''\in \add \overline{T''}$ from \cite[Theorem \RNum{6}.2.5]{AISD}.
 Thus the exact sequence (\ref{3.4}) is split, which shows $\overline{T''}\in \add j^{*}(T)$ and $\add \overline{T''}\subseteq \add j^{*}(T)$.
Then we have that $\add \overline{T''}= \add j^{*}(T)$, and thus $j^{*}(T)$ is a tilting $\Lambda''$-module.
\end{proof}

\begin{remark}
In fact, removing the condition ``$j_{*}j^{*}(\mathcal{F})\subseteq \mathcal{F}$" in Theorem \ref{con-thm-t}(2), $j^{*}(T)$ is still a tilting $\Lambda''$-module, but $(j^{*}(\mathcal{T}),j^{*}(\mathcal{F}))$ is not always a torsion pair in $\modu \Lambda''$ (see \cite[Theorem 3.3]{MXHZYT} and Example \ref{example}(3)).
\end{remark}

\section{Examples}

We give some examples to illustrate the obtained results.

In \cite{AIR},  Adachi,  Iyama and  Reiten introduced the notions of  support $\tau$-tilting
modules and its mutation. Using the mutation of support $\tau$-tilting
modules, one can compute all support $\tau$-tilting
modules  for a basic finite-dimensional algebra $\Lambda$. Note that a module $T\in\modu\Lambda$ is a tilting module if and only if it is a faithful $\tau$-tilting module, and
if $\Lambda$ is hereditary, then a module $T\in\modu\Lambda$ is a tilting module if and only if it is a $\tau$-tilting module (\cite{AIR}). Thus
it provides an efficient method for computing all tilting modules in $\modu\Lambda$.

Let $\Lambda',\Lambda''$ be artin algebras and $_{\Lambda'}N_{\Lambda''}$ an $(\Lambda',\Lambda'')$-bimodule, and let $\Lambda={\Lambda'\ {N}\choose \ 0\ \ \Lambda''}$ be a triangular matrix algebra.
Then any module in $\modu \Lambda$ can be uniquely written as a triple ${X\choose Y}_{f}$ with $X\in\modu \Lambda'$, $Y\in\modu \Lambda'$
and $f\in\Hom_{\Lambda'}(N\otimes_{\Lambda''}Y,X)$ (\cite[p.76]{AMRISSO95R}).

\begin{example}\label{example}
{\rm Let $\Lambda'$ be a finite dimensional algebra given by the quiver $\xymatrix@C=15pt{1\ar[r]&2}$ and $\Lambda''$ be a finite dimensional algebra given by the quiver $\xymatrix@C=15pt{3\ar[r]^{\alpha}&4\ar[r]^{\beta}&5}$ with the relation $\beta\alpha=0$. Define a triangular matrix algebra $\Lambda={\Lambda'\ \Lambda'\choose \ 0\ \ \Lambda''}$, where the right $\Lambda''$-module structure on $\Lambda'$ is induced by the unique algebra surjective homomorphsim $\xymatrix@C=15pt{\Lambda''\ar[r]^{\phi}&\Lambda'}$ satisfying $\phi(e_{3})=e_{1}$, $\phi(e_{4})=e_{2}$, $\phi(e_{5})=0$.  Then $\Lambda$ is
a finite dimensional algebra given by the quiver
$$\xymatrix@C=15pt{&\cdot\\
\cdot\ar[ru]^{\delta}&&\ar[lu]_{\gamma}\cdot\ar[rr]^-{\beta}&&\cdot\\
&\ar[lu]^{\epsilon}\cdot\ar[ru]_{\alpha}}$$
with the relation $\gamma\alpha=\delta\epsilon$ and $\beta\alpha=0$. The Auslander-Reiten quiver of $\Lambda$ is
$$\xymatrix@C=15pt{{0\choose P(5)}\ar[rd]&&{S(2)\choose S(4)}\ar[rd]&&{S(1)\choose 0}\ar[rd]&&0\choose P(3)\ar[rd]\\
&{S(2)\choose P(4)}\ar[ru]\ar[rd]&&P(1)\choose S(4)\ar[ru]\ar[r]\ar[rd]&P(1)\choose P(3)\ar[r]&S(1)\choose P(3)\ar[ru]\ar[rd]&&{0\choose S(3)}.\\
S(2)\choose 0\ar[ru]\ar[rd]&&P(1)\choose P(4)\ar[ru]\ar[rd]&&0\choose S(4)\ar[ru]&&S(1)\choose S(3)\ar[ru]\\
&P(1)\choose 0\ar[ru]&&0\choose P(4)\ar[ru]}$$

By \cite[Example 2.12]{PC14H}, we have that
$$\xymatrix{\modu \Lambda'\ar[rr]!R|-{i_{*}}&&\ar@<-2ex>[ll]!R|-{i^{*}}
\ar@<2ex>[ll]!R|-{i^{!}}\modu \Lambda
\ar[rr]!L|-{j^{*}}&&\ar@<-2ex>[ll]!R|-{j_{!}}\ar@<2ex>[ll]!R|-{j_{*}}
\modu \Lambda''}$$
is a recollement of module categories, where
\begin{align*}
&i^{*}({X\choose Y}_{f})=\Coker f, & i_{*}(X)={X\choose 0},&&i^{!}({X\choose Y}_{f})=X,\\
&j_{!}(Y)={N\otimes_{\Lambda''} Y\choose Y}_{1}, & j^{*}({X\choose Y}_{f})=Y, &&j_{*}(Y)={0\choose Y}.
\end{align*}
\begin{itemize}
\item[(1)] Take tilting modules $T'=P(1)\oplus S(1)$ and $T''=P(5)\oplus P(4)\oplus P(3)$ in $\modu \Lambda'$ and $\modu \Lambda''$ respectively. They induce torsion pairs
    \begin{align*}
    (\mathcal{T'},\mathcal{F'})=&(\add(P(1)\oplus S(1)),\add S(2)),\\
(\mathcal{T''},\mathcal{F''})=&(\modu \Lambda'', 0)
\end{align*}
 in $\modu \Lambda'$ and $\modu \Lambda''$ respectively.
 Then by Theorem \ref{rec-tilting}, there is a tilting $\Lambda$-module $T={0\choose P(5)}\oplus{S(2)\choose P(4)}\oplus {P(1)\choose P(3)}\oplus{P(1)\choose P(4)}\oplus {P(1)\choose 0}$.
It induces a torsion pair
 \begin{align*}
(\Gen T,T^{\perp_{0}})=(\add ({P(1)\choose 0}\oplus{0\choose P(5)}\oplus{S(2)\choose P(4)}\oplus {P(1)\choose P(4)}\oplus{0\choose P(4)}\oplus{S(2)\choose S(4)}
\oplus {P(1)\choose S(4)}\oplus\ \ \ \\ {0\choose S(4)}\oplus {S(1)\choose 0}\oplus{S(1)\choose P(3)}
\oplus {S(1)\choose S(3)}
\oplus {P(1)\choose P(3)}\oplus{0\choose P(3)}\oplus {0\choose S(3)}),\add {S(2)\choose 0})
\end{align*}
 in $\modu \Lambda$,
which is exactly a glued torsion pair with respect to $(\mathcal{T'},\mathcal{F'})$ and $(\mathcal{T''},\mathcal{F''})$.

\item[(2)] Take  tilting modules $T'=P(1)\oplus S(1)$ and $T''=P(3)\oplus P(4)\oplus S(4)$ in $\modu \Lambda'$ and $\modu \Lambda''$ respectively.
They induce torsion pairs
    \begin{align*}
    (\mathcal{T'},\mathcal{F'})=&(\add(P(1)\oplus S(1)),\add S(2)),\\
(\mathcal{T''},\mathcal{F''})=&(\add (P(3)\oplus P(4)\oplus S(4)\oplus S(3)), \add P(5))
\end{align*}
 in $\modu \Lambda'$ and $\modu \Lambda''$ respectively.
  Then by Theorem \ref{rec-tilting}, there is a tilting $\Lambda$-module $T={P(1)\choose P(3)}\oplus {S(2)\choose P(4)}\oplus {S(2)\choose S(4)}\oplus{P(1)\choose P(4)}\oplus {P(1)\choose 0} $.
It induces a torsion pair
 \begin{align*}
(\Gen T,T^{\perp_{0}})=(\add ({S(2)\choose P(4)}\oplus {P(1)\choose P(4)}\oplus{P(1)\choose 0}\oplus{0\choose P(4)}\oplus{S(2)\choose S(4)}
\oplus {P(1)\choose S(4)}\oplus {0\choose S(4)}\oplus\ \ \ \\{S(1)\choose P(3)}
\oplus {S(1)\choose S(3)}
\oplus {S(1)\choose 0}\oplus {P(1)\choose P(3)}\oplus{0\choose P(3)}\oplus {0\choose S(3)}),\add ({S(2)\choose 0}\oplus {0\choose P(5)}))
\end{align*}
 in $\modu \Lambda$,
which is exactly a glued torsion pair with respect to $(\mathcal{T'},\mathcal{F'})$ and $(\mathcal{T''},\mathcal{F''})$.

\item[(3)] Take a tilting module $T={S(2)\choose S(4)}\oplus {P(1)\choose P(4)}\oplus {0\choose P(4)}\oplus {P(1)\choose S(4)}\oplus {P(1)\choose P(3)}$ in $\modu \Lambda$.
    It induces a torsion pair
    \begin{align*}
(\mathcal{T},\mathcal{F})=(\add({S(2)\choose S(4)}\oplus{P(1)\choose P(4)}\oplus {P(1)\choose S(4)}\oplus{0\choose P(4)}\oplus {S(1)\choose 0}\oplus {P(1)\choose P(3)}\oplus {0\choose S(4)}\oplus \\{S(1)\choose P(3)}\oplus {S(1)\choose S(3)}\oplus {0\choose S(3)}\oplus{0\choose P(3)},
\add({0\choose P(5)} \oplus {S(2)\choose 0}\oplus {P(1)\choose 0} \oplus {S(2)\choose P(4)}))
\end{align*}
    in $\modu \Lambda$.
By \cite[Theorem 3.5]{MXHZYT}, we have that $j^{*}(T)=S(4)\oplus S(4)\oplus P(4)\oplus P(4)\oplus P(3)$ is a tilting $\Lambda''$-module.
    But $j_{*}j^{*}(\mathcal{F})=\add ({0\choose P(4)}\oplus {0\choose P(5)})\subsetneq\mathcal{F}$, we have that
$$(j^{*}(\mathcal{T}),j^{*}(\mathcal{F}))=(\add (S(4)\oplus P(4)\oplus P(3)\oplus S(3)), \add P(5)\oplus P(4))$$
is not a torsion pair in $\modu \Lambda''$ and $(j^{*}(\mathcal{T}),j^{*}(\mathcal{F}))\neq (\Gen j^{*}(T),(j^{*}(T))^{\perp_{0}})$.

\item[(4)] Take a tilting module $T={S(2)\choose S(4)}\oplus {S(2)\choose P(4)}\oplus {P(1)\choose P(4)}\oplus {0\choose P(4)}\oplus {P(1)\choose P(3)}$ in $\modu \Lambda$.
    It induces a torsion pair
    \begin{align*}
(\mathcal{T},\mathcal{F})=(\add({S(2)\choose S(4)}\oplus{S(2)\choose P(4)}\oplus{P(1)\choose P(4)}\oplus {P(1)\choose S(4)}\oplus{0\choose P(4)}\oplus {S(1)\choose 0}\oplus {P(1)\choose P(3)}\oplus {0\choose S(4)}\oplus \\{S(1)\choose P(3)}\oplus {S(1)\choose S(3)}\oplus {0\choose S(3)}\oplus{0\choose P(3)},
\add({0\choose P(5)} \oplus {S(2)\choose 0}\oplus {P(1)\choose 0}))
\end{align*}
    in $\modu \Lambda$.
By Theorem \ref{con-thm-t}(2), we have that $j^{*}(T)=S(4)\oplus P(4)\oplus P(4)\oplus P(4)\oplus P(3)$ is a tilting $\Lambda''$-module and
$$(j^{*}(\mathcal{T}),j^{*}(\mathcal{F}))=(\add (S(4)\oplus P(4)\oplus P(3)\oplus S(3)), \add P(5))= (\Gen j^{*}(T),(j^{*}(T))^{\perp_{0}}).$$
\end{itemize}}
\end{example}

\vspace{0.5cm}
\textbf{Acknowledgement}.
The first author was supported by Henan University of Engineering (No. DKJ2019010).
The second author was supported by the NSF of China (Nos. 11971225, 11901341), the project ZR2019QA015 supported by Shandong Provincial Natural Science Foundation, and the Young Talents Invitation Program of Shandong Province. Part of this work was done by the second author during a visit at Tsinghua University.
He would like to express his gratitude to  Department of Mathematical Sciences and especially
to Professor Bin Zhu for the warm hospitality and the excellent working conditions.
The authors thank Professor Zhaoyong Huang for his careful guidance and consistent encouragement. Special thanks to
the referee whose valuable corrections and suggestions have  improved the presentation and organization of
this article.

\end{document}